\documentclass[12pt]{article}                                                                                                                                                                    

\usepackage{amsthm,amsmath,amssymb,graphics,graphicx,hyperref,epstopdf,enumerate}
\usepackage{tikz}
\usetikzlibrary{shapes}
\usepackage{multicol}
\setlength\textwidth{6in}
\setlength\parskip{0.3em}

\setlength\oddsidemargin{.2in}
\setlength\evensidemargin{.2in}

\setlength\topmargin{-.5in}
\setlength\textheight{9in}

\title{The Dimension of the Negative Cycle Vectors\\ of Signed Graphs}
\author{Alex Schaefer\footnote{Dept.\ of Mathematical Sciences, Binghamton University, Binghamton, NY 13902-6000, U.S.A.; aschaef3@binghamton.edu} and Thomas Zaslavsky\footnote{Dept.\ of Mathematical Sciences, Binghamton University, Binghamton, NY 13902-6000, U.S.A.; zaslav@math.binghamton.edu}}
\date{\today}

\newtheorem{theorem}{Theorem}[section]

\newtheorem{corollary}[theorem]{Corollary}
\newtheorem{lemma}[theorem]{Lemma}
\theoremstyle{definition}

\newtheorem{conjecture}[theorem]{Conjecture}

\numberwithin{equation}{section}

\newcommand\bbR{\mathbb{R}}
\newcommand{\G}{\Gamma}  
\renewcommand{\S}{\Sigma}  
\newcommand{\s}{\sigma}

\newcommand\NCV{\operatorname{NCV}}
\DeclareMathOperator{\Spec}{SpecC}

\newcommand\ssum{\operatorname*{\sum\sum}}
\newcommand\odd{\mathrm{odd}}
\newcommand\even{\mathrm{even}}

\newcommand{\ssection}[1]{%
  \section[#1]{\centering\normalfont\scshape #1}}
\newcommand{\ssubsection}[1]{%
  \subsection[#1]{\raggedright\normalfont\itshape #1}}

\begin{document} 

\maketitle

\tableofcontents

\begin{abstract}
A \emph{signed graph} is a graph $\G$ where the edges are assigned sign labels, either ``$+$'' or ``$-$''. The sign of a cycle is the product of the signs of its edges. Let $\Spec(\G)$ denote the list of lengths of cycles in $\G$. We equip each signed graph with a vector whose entries are the numbers of negative $k$-cycles for $k\in\Spec(\G)$. These vectors generate a subspace of $\bbR^{\Spec(\G)}$. Using matchings with a strong permutability property, we provide lower bounds on the dimension of this space; in particular, we show for complete graphs, complete bipartite graphs, and a few other graphs that this space is all of $\bbR^{\Spec(\G)}$.
\end{abstract}

\ssection{Introduction}

A \emph{signed graph} $\S$ is a graph $\G$ whose edges have sign labels, either ``$+$'' or ``$-$''. The sign of a cycle in the graph is the product of the signs of its edges.  Write $c_l^-(\S)$ for the number of negative cycles of length $l$ in $\S$ and collect these numbers in the \emph{negative cycle vector} $c^-(\S) = (c_3^-,c_4^-,\ldots,c_n^-)\in\bbR^{n-2}$, where $n$ is the order of $\S$.  We are interested in the structure of the collection $\NCV(\G)$ of all negative cycle vectors of signings of a fixed underlying simple graph $\G$.  

There are (at least) three natural questions raised by the existence of these collections of vectors.  
Most simply, what is their dimension? This is the question we address here.  
The \emph{cycle spectrum} $\Spec(\G)$ is the list of lengths of cycles in $\G$; $\NCV(\G)$ is a subset of $\bbR^{\Spec(\G)}$ and generates an affine subspace (which is a linear subspace since the negative cycle vector $c^-(+\G)$ corresponding to the all-positive signing is the zero vector).  
We develop a general approach to the dimension question in terms of ``permutable matchings'' (see Section \ref{pmatch}) that allows us to prove for $\G = K_n$, $K_{m,n}$, and the Petersen graph that $\NCV(\G)$ has dimension $|\Spec(\G)|$; it also gives us a lower bound for the Heawood graph and one other graph family.  
(We also solve a few examples with an \emph{ad hoc} method.)

Secondly, what is their convex hull? In \cite{PopTom1996} and \cite{Tomescu1976}, Popescu and Tomescu gave inequalities bounding the numbers of negative cycles in a signed complete graph, which is a step towards the answer for $K_{n}$.  
A related question: Do the facets of the convex cone generated by $\NCV(\G)$ have combinatorial meaning?

Finally, which vectors in the convex hull are actually the vectors of signed graphs? 
Recently Kittipassorn and M\'esz\'aros \cite{KittipassornMeszaros} gave strong restrictions on the number of negative triangles in a signed $K_{n}$.  Again, this provides a step towards that answer.

Our work was originally motivated by the complete graph and a natural extension to complete bipartite graphs.  Those cases and others led to the following plausible conjecture.

\begin{conjecture}[Schaefer, 2017]\label{Conj:S}
For any graph $\G$, $\dim\NCV(\G) = |\Spec(\G)|$.
\end{conjecture}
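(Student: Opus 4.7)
The plan is to produce, for each $k \in \Spec(\G)$, a signing $\S_k$ of $\G$ such that the vectors $\{c^-(\S_k) : k \in \Spec(\G)\}$ are linearly independent in $\bbR^{\Spec(\G)}$. Since $\NCV(\G)$ already lies in $\bbR^{\Spec(\G)}$ and contains the zero vector $c^-(+\G)$, this would give $\dim \NCV(\G) = |\Spec(\G)|$, matching the trivial upper bound.

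First, I would build up the permutable-matching apparatus promised in Section \ref{pmatch}. Starting from $+\G$ (whose negative cycle vector is $0$) and negating the edges of a set $M \subseteq E(\G)$ produces a signing whose $l$-th coordinate counts the number of $l$-cycles meeting $M$ in an odd number of edges. When $M$ is a matching with the right symmetry or ``permutability'' property, this count splits cleanly across cycle lengths, making the resulting vectors tractable to compare. Iterating the switching operation on nested or disjoint permutable matchings gives a flexible way to generate many signings whose cycle counts are explicitly computable.

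Second, I would enumerate $\Spec(\G) = \{k_1 < k_2 < \cdots < k_r\}$ and construct $\S_1, \ldots, \S_r$ so that the matrix $(c^-_{k_j}(\S_i))_{i,j}$ is triangular with nonzero diagonal. Concretely, one hopes to find for each $i$ a matching or switching set $M_i$ whose shortest ``odd interaction'' with a cycle occurs at length $k_i$; alternatively, $\mathbb{F}_2$-linear combinations of edge switchings could be engineered to cancel contributions at shorter cycle lengths while preserving a nonzero contribution at $k_i$. Triangularity then forces linear independence of the $c^-(\S_i)$.

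The principal obstacle is that an arbitrary $\G$ need have no nontrivial automorphism and hence no useful permutable matching at all; the technique thrives on the abundant symmetry present in $K_n$, $K_{m,n}$, and the Petersen graph. I would therefore attempt a parallel line: view each signing as a point of $\{\pm 1\}^{E(\G)}$ and study the polynomial map sending a signing to its cycle-count vector, aiming to show that a smooth relaxation has a differential of rank $|\Spec(\G)|$ at some point. An inductive attack by vertex or edge deletion is also plausible, but the difficulty is that $\Spec(\G)$ and the local cycle structure can change unpredictably under such modifications, so maintaining linear independence of the distinguished signings through the induction step appears delicate. Reconciling the global combinatorial constraints on $\Spec(\G)$ with the local modifications needed to populate each new coordinate is, in my view, the central difficulty of Conjecture~\ref{Conj:S}.
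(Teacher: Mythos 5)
This statement is a conjecture, not a theorem: the paper offers no proof of it, and in fact the machinery it develops demonstrably cannot prove it in full generality. Your proposal is therefore not comparable to ``the paper's own proof,'' because none exists; what you have written is a research plan, and you are candid that its central difficulty is unresolved. To your credit, the first half of your sketch accurately reconstructs what the paper actually does: negate a permutable matching $M_s$ edge by edge, use the permutability to make $c_l^-(\G_{M_s})$ a polynomial $p_l(s)$ of degree $d_l = \mu(l)$ via M\"obius inversion (Equations \eqref{E:c_l} and \eqref{E:Ms}), and extract linear independence from a Vandermonde-type argument on the distinct degrees (Lemma \ref{L:UR ranks}, Theorem \ref{T:main}). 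That is essentially your ``triangular matrix with nonzero diagonal'' idea, realized through distinct polynomial degrees rather than literal triangularity. But the paper only gets equality $\dim\NCV(\G)=|\Spec(\G)|$ when the graph is symmetric enough to carry a large permutable matching meeting every cycle length appropriately --- $K_n$, $K_{p,q}$, the Petersen graph, the octahedron, and a few joins --- and it explicitly exhibits the limits of the method: the Heawood graph has no permutable $4$-matching (Theorem \ref{T:cubic}), so the technique tops out at a lower bound of $2$ against a spectrum of size $5$, and even in the Petersen graph one of the two kinds of permutable $3$-matching yields only dimension $3$.

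The genuine gap, then, is exactly the one you name but do not close: a generic graph has trivial automorphism group, hence no permutable matching beyond a single edge, and the entire polynomial/Vandermonde apparatus collapses. Your two proposed escape routes do not work as stated. The ``smooth relaxation'' idea founders because $c^-$ is defined only on the finite set $\{\pm1\}^{E(\G)}$ and the natural multilinear extension's differential having rank $|\Spec(\G)|$ at some real point says nothing about the span of its values at the $2^{|E|}$ sign vectors; you would still need to produce actual signings with independent vectors. The inductive attack by deletion faces the obstruction you yourself identify ($\Spec$ changes unpredictably), plus a harder one: a cycle length present in $\G$ may be witnessed only by cycles through the deleted vertex or edge, so independence in the subgraph gives no handle on the new coordinate. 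If you want to make progress, the honest target is either a new construction of independent signings that does not route through $\Aut(\G)$, or a counterexample; as written, your proposal establishes nothing beyond what Theorem \ref{T:main} already gives for symmetric graphs.
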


\ssection{Background}

\ssubsection{Graphs}\label{g}

A \emph{graph} is a pair $\G=(V,E)$, where $V=\{v_{1},\dots,v_{n}\}$ is a (finite) set of \emph{vertices} and $E$ is a (finite) set of unordered pairs of vertices, called \emph{edges}. Our graphs are all unlabeled, simple, and undirected.  Thus, all cycle lengths are between 3 and $n$.

The number of cycles of length $l$ in $\G$ is $c_l=c_l(\G)$.  The cycle vector of $\G$ is $c(\G)=(c_3,c_4,\ldots,c_n)$; sometimes we omit the components that correspond to lengths $l$ not in the cycle spectrum.

\ssubsection{Signed graphs}\label{sg}

A \emph{signed graph} is a triple $\S=(V,E,\s)$ where $\G=(V,E)$ is a graph (the \emph{underlying graph} of $\S$) and $\s:E\to\{+,-\}$ is the \emph{sign function}.
The \emph{sign of a cycle} is the product of the signs of its edges; a signed graph in which every cycle is positive is called \emph{balanced}.  
The \emph{negative edge set} $E^-$ is the set of negative edges of $\S$ and the \emph{negative subgraph} is $\S^-=(V,E^-)$, the spanning subgraph of negative edges.  
We sometimes write $\G_N$ for $\G$ signed so that $N$ is its set of negative edges.  

\emph{Switching} $\S$ means choosing a vertex subset $X \subseteq V$ and negating all the edges between $X$ and its complement.  
Switching yields an equivalence relation on the set of all signings of a fixed underlying graph.
If $\S_{2}$ is isomorphic to a switching of $\S_{1}$, we say that $\S_{1}$ and $\S_{2}$ are \emph{switching isomorphic}.  This relation is an equivalence relation on signed graphs; we denote the equivalence class of $\S$ by $[\S]$.
A signed graph is balanced if and only if it is switching isomorphic to the all-positive graph.
Signed graphs that are switching isomorphic (like those in Figure \ref{K6equiv}) have the same negative cycle vector.

As with $c(\G)$, we may omit the components of $c^-(\S)$ that correspond to lengths $l$ not in the cycle spectrum.  
Also, we may write either $c^-(\S)$ or $c^-(\s)$, the latter when only the signature $\s$ is varying.


\begin{figure}[h]
\begin{multicols}{2}
\begin{center}
\begin{tikzpicture}[scale=.2]

\node[fill, shape=circle] (1) at (210:8) {};
\node[fill, shape=circle] (2) at (330:8) {};
\node[fill, shape=circle] (3) at (150:8) {};
\node[fill, shape=circle] (4) at (30:8) {};
\node[fill, shape=circle] (5) at (90:8) {};
\node[fill, shape=circle] (6) at (270:8) {};

\draw[line width=2pt, style=loosely dashed] (1)--(2) ;
\draw[line width=2pt] (1)--(3) ;
\draw[line width=2pt] (1)--(4) ;
\draw[line width=2pt] (1)--(5) ;
\draw[line width=2pt] (1)--(6) ;
\draw[line width=2pt] (2)--(3) ;
\draw[line width=2pt] (2)--(4) ;
\draw[line width=2pt] (2)--(5) ;
\draw[line width=2pt] (2)--(6) ;
\draw[line width=2pt] (3)--(4) ;
\draw[line width=2pt, style=loosely dashed] (3)--(5) ;
\draw[line width=2pt] (3)--(6) ;
\draw[line width=2pt, style=loosely dashed] (4)--(5) ;
\draw[line width=2pt] (4)--(6) ;
\draw[line width=2pt, style=loosely dashed] (5)--(6) ;

\end{tikzpicture}\\
\columnbreak
\begin{tikzpicture}[scale=.2]

\node[fill, shape=circle] (1) at (210:8) {};
\node[fill, shape=circle] (2) at (330:8) {};
\node[fill, shape=circle] (3) at (150:8) {};
\node[fill, shape=circle] (4) at (30:8) {};
\node[fill, shape=circle] (5) at (90:8) {};
\node[fill, shape=circle] (6) at (270:8) {};

\draw[line width=2pt, style=loosely dashed] (1)--(2) ;
\draw[line width=2pt] (1)--(3) ;
\draw[line width=2pt] (1)--(4) ;
\draw[line width=2pt, style=loosely dashed] (1)--(5) ;
\draw[line width=2pt] (1)--(6) ;
\draw[line width=2pt] (2)--(3) ;
\draw[line width=2pt] (2)--(4) ;
\draw[line width=2pt, style=loosely dashed] (2)--(5) ;
\draw[line width=2pt] (2)--(6) ;
\draw[line width=2pt] (3)--(4) ;
\draw[line width=2pt] (3)--(5) ;
\draw[line width=2pt] (3)--(6) ;
\draw[line width=2pt] (4)--(5) ;
\draw[line width=2pt] (4)--(6) ;
\draw[line width=2pt] (5)--(6) ;

\end{tikzpicture}\\
\end{center}
\end{multicols}
\caption{Two switching equivalent signings of $K_{6}$, with the same negative cycle vector $(10,18,36,36)$.  Solid lines are positive, dashed lines are negative.}
\label{K6equiv}
\end{figure}
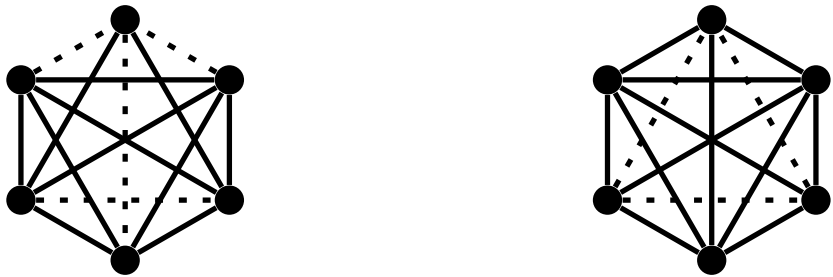

The \emph{negation} of $\S$ is $-\S=(V,E,-\s)$, in which the sign of every edge is negated.  Sometimes $\S$ and $-\S$ are switching isomorphic, e.g., when $\S$ is bipartite or when it is a signed complete graph whose negative subgraph is self-complementary.

\ssubsection{Permutable matchings}\label{pmatch}

A \emph{matching} in $\G$ is a set $M$ of pairwise nonadjacent edges; it is \emph{perfect} if $V(M)=V$.  
A matching $M$ (or any other edge set) is \emph{permutable} if the automorphism group of $\G$ acts on the edges of $M$ as the symmetric group $S_{|M|}$.
We base our results largely on permutable matchings, after Zaslavsky noticed their utility in proving our results for complete and complete bipartite graphs.
The advantage of permutability is that, in counting negative cycles using a permutable matching, any two equicardinal subsets belong to the same number of negative cycles of each length.  That makes it feasible to calculate the numbers in the vectors we use to estimate the dimension of $\NCV(\G)$.  

Our introduction of permutable matchings led to the question: Which graphs have permutable matchings?  That has been investigated by Schaefer and Swartz in \cite{SS2017}; they find large families of examples.  On the other hand, there are only a few kinds of graph with permutable perfect matchings; Schaefer and Swartz determine them all.

\ssection{Rank and Dimension}\label{rankdim}

The dimension of $\NCV(\G)$ is the rank of the matrix whose rows are the negative cycle vectors of all signatures of $\G$.  (The columns of this matrix may be regarded as corresponding to all lengths $k \in \{3,4,\ldots,n\}$, or only the lengths in $\Spec(\G)$, depending on which is more convenient.  The column of $k \notin \Spec(\G)$, if included, is all zero.)  We know the rank  cannot be greater than $|\Spec(\G)|$, the number of nonzero columns, so if we produce a submatrix of that rank we have proved that $\dim\NCV(\G) = |\Spec(\G)|$.  That is what we now endeavor to do with the aid of a permutable matching.  

Even if permutable matchings fail to reach the spectral upper bound, they imply a lower bound.  
However, we are happy to say that in our three main examples, permutable matchings solve the dimension problem.

The rank of a matrix $A$ is written $\mathrm{rk}(A)$.

\ssubsection{Any negative edge set}

We begin with the most general calculation.  
Given a signed graph $\G_N$ with an arbitrary negative edge set $N \subseteq E$, how many negative cycles are there of each length?  For $X \subseteq N$ let $f_l(X) := $ the number of $l$-cycles that intersect $N$ precisely in $X$.  
We get a formula for $f$ by M\"obius inversion from $g_l(X) :=$ the number of $l$-cycles that contain $X$, since
$$
g_l(X) = \sum_{X \subseteq Y \subseteq N} f_l(Y),
$$ 
which implies that
$$
f_l(X) = \sum_{X \subseteq Y \subseteq N} (-1)^{|Y|-|X|} g_l(Y).
$$
The number of negative $l$-cycles is the number of $l$-cycles that intersect $N$ in an odd number of edges; therefore,
\begin{align}
c_l^-(\G_N) = \sum_{X \subseteq N,\, |X|\text{ odd}} f_l(X) 
&= \ssum_{X \subseteq Y \subseteq N,\, |X|\text{ odd}} (-1)^{|Y|-|X|} g_l(Y) \notag\\
&= \sum_{Y \subseteq N} g_l(Y) \sum_{X \subseteq Y,\, |X|\,\text{odd}} (-1)^{|Y|-|X|} \notag\\
&= \sum_{\emptyset \neq Y \subseteq N} (-2)^{|Y|-1} g_l(Y).
\label{E:c_l}
\end{align}
This applies to every underlying graph $\G$.

\ssubsection{A matrix calculation}

Now assume we have a graph $\G$ of order $n$ and $m$ unbalanced sign functions $\s_1,\ldots,\s_m$ in addition to the all-positive function $\s_0\equiv+$.  To avoid redundancy we want the associated signed graphs to be switching nonisomorphic.  For instance, choosing more than half the edges at a vertex to be negative is switching equivalent to choosing fewer than half, so we would not want the negative edge set to contain more than $\frac12(\deg(v)-1)$ of the edges incident with any vertex $v$.

For the present assume $n$ is even.   
Here is the matrix of the negative cycle vectors of all signings $\s_s$ and their negatives, with columns segregated by parity.  
The rows are one for $+\G$ ($\s_0\equiv+$), then $m$ rows for the unbalanced signatures $\s_s$, $0<s\leq m$, then $-\G$ (the signature $-\s_0\equiv-$), then the $m$ negations $-\s_s$.  
The relationship between the upper and lower halves is that 
\[
c_l^-(-\s_s) = \begin{cases}
c_l - c_l^-(\s_s) &\text{ if $l$ is odd.} \\
c_l^-(\s_s) &\text{ if $l$ is even.}
\end{cases}
\]
The resulting matrix is
\begin{align}
{\small
\left(
\begin{array}{cccc|cccc}
0	&0	&\cdots	&0		&0	&0	&\cdots	&0	\\
c_3^-(\s_1)	&c_5^-(\s_1)	&\cdots	&c_{n-1}^-(\s_1)	&c_4^-(\s_1)	&c_6^-(\s_1)	&\cdots	&c_n^-(\s_1)	\\
\vdots	&\vdots	&\ddots	&\vdots		&\vdots	&\vdots	&\cdots	&\vdots	\\
c_3^-(\s_m)	&c_5^-(\s_m)	&\cdots	&c_{n-1}^-(\s_m)	&c_4^-(\s_m)	&c_6^-(\s_m)	&\cdots	&c_n^-(\s_m)	\\
c_3	&c_5	&\cdots	&c_{n-1}		&0	&0	&\cdots	&0	\\
c_3-c_3^-(\s_1) &c_5-c_5^-(\s_1) &\cdots &c_{n-1}-c_{n-1}^-(\s_1) &c_4^-(\s_1) &c_6^-(\s_1) &\cdots &c_n^-(\s_1) \\
\vdots	&\vdots	&\ddots	&\vdots	&\vdots	&\vdots	&\cdots	&\vdots	\\
c_3-c_3^-(\s_m) &c_5-c_5^-(\s_m) &\cdots&c_{n-1}-c_{n-1}^-(\s_m)	&c_4^-(\s_m) &c_6^-(\s_m) &\cdots &c_n^-(\s_m) \\
\end{array}
\right).
}
\label{E:neg cycle matrix}
\end{align}
Row operations reduce this matrix to 
\begin{align}
&\quad
{
\left(
\begin{array}{cccc|cccc}
0	&0	&\cdots	&0		&0	&0	&\cdots	&0	\\
c_3^-(\s_1)	&c_5^-(\s_1)	&\cdots	&c_{n-1}^-(\s_1)	&0	&0	&\cdots	&0	\\
\vdots	&\vdots	&\ddots	&\vdots		&\vdots	&\vdots	&\cdots	&\vdots	\\
c_3^-(\s_m)	&c_5^-(\s_m)	&\cdots	&c_{n-1}^-(\s_m)	&0	&0	&\cdots	&0	\\
c_3	&c_5	&\cdots	&c_{n-1}		&0	&0	&\cdots	&0	\\
0	&0	&\cdots	&0		&c_4^-(\s_1)	&c_6^-(\s_1)	&\cdots	&c_n^-(\s_1) \\
\vdots	&\vdots	&\ddots	&\vdots		&\vdots	&\vdots	&\cdots	&\vdots	\\
0	&0	&\cdots	&0		&c_4^-(\s_m)	&c_6^-(\s_m)	&\cdots	&c_n^-(\s_m) \\
\end{array}
\right).
}
\label{E:reduced nc matrix even}
\end{align}
Ignoring the first row of zeroes, this is a block matrix 
$$
A:=\begin{pmatrix} U & O \\ c_{\mathrm{odd}}(\G) & \mathbf0 \\[2pt] O & R \end{pmatrix}.
$$
The middle row $c_{\text{odd}}(\G)$, consisting of the odd-cycle numbers of $\G$, corresponds to $-\G$.  The upper left block $U$ is the matrix of negative odd-cycle vectors of the unbalanced signatures $\s_s$, and the lower right block $R$ is the matrix of negative even-cycle vectors of the same signatures.  We infer the fundamental fact that:

\begin{lemma}\label{T:neg cycle matrix}
The rank of the negative cycle matrix \eqref{E:neg cycle matrix} 
equals the sum of the ranks of $\begin{pmatrix} U \\ c_{\mathrm{odd}}(\G) \end{pmatrix}$ and $R$.
\end{lemma}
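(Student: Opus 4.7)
The plan is to verify the two steps implicit in the derivation of \eqref{E:reduced nc matrix even}: first that the passage from \eqref{E:neg cycle matrix} to \eqref{E:reduced nc matrix even} is rank-preserving, and second that the rank of the reduced matrix decomposes as the sum of the ranks of its two diagonal blocks.

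First, I would justify the row reduction explicitly. Using the case distinction $c_l^-(-\s_s)=c_l^-(\s_s)$ for $l$ even and $c_l^-(-\s_s)=c_l-c_l^-(\s_s)$ for $l$ odd (displayed just before \eqref{E:neg cycle matrix}), the $-\s_s$ row has even-cycle entries matching the $\s_s$ row and odd-cycle entries $c_l-c_l^-(\s_s)$. Replacing the $-\s_s$ row by (row of $-\s_s$) $+$ (row of $\s_s$) $-$ (row of $-\G$) zeros every odd-cycle entry and doubles every even-cycle entry; scaling by $\tfrac12$ then produces exactly the corresponding bottom row of \eqref{E:reduced nc matrix even}. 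Each operation is an invertible elementary row operation, so the rank is preserved.

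Second, after this reduction and discarding the identically zero top row (the $+\G$ row, whose removal does not affect rank), the matrix assumes the block form
$$A = \begin{pmatrix} U & O \\ c_{\mathrm{odd}}(\G) & \mathbf{0} \\[2pt] O & R \end{pmatrix}.$$
The columns partition into an odd-length group and an even-length group; the first $m+1$ rows vanish on the even group, while the last $m$ rows vanish on the odd group. This is a block-diagonal matrix with respect to the row/column partition, so by a standard linear-algebra fact its row space is the internal direct sum of the row spaces of the two diagonal blocks, whence
$$\mathrm{rk}(A) = \mathrm{rk}\!\begin{pmatrix} U \\ c_{\mathrm{odd}}(\G) \end{pmatrix} + \mathrm{rk}(R),$$
which is the claimed identity.

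No real obstacle arises: the argument is routine linear algebra. The only subtlety is executing the row reduction correctly in both parity cases, which the formula for $c_l^-(-\s_s)$ handles uniformly, and being explicit that the only effect of removing the all-zero $+\G$ row is to tidy the bookkeeping rather than change the rank.
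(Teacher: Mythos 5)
Your proposal follows the paper's own (largely implicit) argument: apply rank-preserving row operations to pass from \eqref{E:neg cycle matrix} to \eqref{E:reduced nc matrix even}, then read the rank off the block-diagonal form $A$. One small omission: the operations you list only transform the $-\s_s$ rows, so after them the $\s_s$ rows still read $(U \mid R)$ rather than $(U \mid O)$, and the block-diagonal rank decomposition does not yet apply; you need the further elementary step of subtracting each newly created row $(O \mid c^-_{\even}(\s_s))$ from the corresponding $\s_s$ row to clear its even-length entries. With that one extra subtraction the reduction is exactly the paper's and the rest of your argument is correct.
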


Lemma \ref{T:neg cycle matrix} is written for even $n$ but by putting into $U$ a column for $c^-_{n+1}$ we include the odd cycles of order $n+1$ ($n$ still being even).  This can be handled by the same computation.  The reduced matrix in this case is
\begin{align}
{
\left(
\begin{array}{ccccc|cccc}
0	&0	&\cdots	&0	&0		&0	&0	&\cdots	&0	\\
c_3^-(\s_1)	&c_5^-(\s_1)	&\cdots	&c_{n-1}^-(\s_1)&c_{n+1}^-(\s_1)	&0	&0	&\cdots	&0	\\
\vdots	&\vdots	&\ddots	&\vdots	&\vdots		&\vdots	&\vdots	&\cdots	&\vdots	\\
c_3^-(\s_m)	&c_5^-(\s_m)	&\cdots	&c_{n-1}^-(\s_m)&c_{n+1}^-(\s_m)	&0	&0	&\cdots	&0	\\
c_3	&c_5	&\cdots	&c_{n-1}	&c_{n+1}		&0	&0	&\cdots	&0	\\
0	&0	&\cdots	&0	&0		&c_4^-(\s_1)	&c_6^-(\s_1)	&\cdots	&c_n^-(\s_1) \\
\vdots	&\vdots	&\ddots	&\vdots	&\vdots		&\vdots	&\vdots	&\cdots	&\vdots	\\
0	&0	&\cdots	&0	&0		&c_4^-(\s_m)	&c_6^-(\s_m)	&\cdots	&c_n^-(\s_m) \\
\end{array}
\right).
}
\label{E:reduced nc matrix odd}
\end{align}

For a bipartite graph $U=O$ and $c_{\mathrm{odd}}=\mathbf0$, so only $R$ needs to be considered.

\ssubsection{Permutable negative matchings}\label{pnegm}

Henceforth we assume we have chosen a fixed permutable matching $M_m$ of $m$ edges in $\G$.  
For each $s=1,2,\ldots,m$ we choose a submatching $M_s \subseteq M_m$ of $s$ edges and we define the signature $\s_s$ as that of the signed graph $\G_{M_s}$.  (It does not matter which $M_s$ we use, because $M_m$ is permutable.)  This generates a matrix of negative cycle vectors as in \eqref{E:neg cycle matrix}.  

In particular, in $K_n$ the biggest permutable edge set is a perfect or near-perfect matching.  This turns out to be ``perfect'' for our purposes.  (An almost equally big set is half the edges incident to one vertex, but we found that to be useless since then the entire matrix \eqref{E:neg cycle matrix} has rank 1.)

Permutability implies that $g_l(Y)$ depends only on $|Y|$ so we may define $G_l(k)=g_l(Y)$ for any one $k$-edge subset $Y\subseteq M_m$.  
Then \eqref{E:c_l} becomes
\begin{align}
c_l^-(\G_{M_s}) &= \sum_{k=1}^{s} (-2)^{k-1} \binom{s}{k} G_l(k) 
= \sum_{k=1}^{n} (-2)^{k-1} \frac{G_l(k)}{k!}  (s)_k,
\label{E:Ms}
\end{align}
where $(x)_k$ denotes the falling factorial, $(x)_k=x(x-1)\cdots(x-[k-1])$.  
Formula \eqref{E:Ms} gives $c_l^-(\G_{M_s})$ as a polynomial function $p_l(s)$ without constant term, of degree $d_l$ where $d_l$ as the largest integer $k$ for which $G_l(k) > 0$; that is, $d_l$ is the largest size of a submatching of $M_m$ that is contained in some cycle of length $l$.  (We leave $d_l$ undefined if no $l$-cycle intersects $M_m$.)  Clearly, $d_l\leq m$.

(Our reasoning works equally well for subsets of any permutable edge set $N$ in any graph.  It is easy to see that there are only three possible kinds of permutable set: a matching, a subset of the edges incident to a vertex, and the three edges of a triangle.  We mentioned that a permutable set of edges at a vertex is useless for $K_n$.  We have not seen a graph where a triangle's edges might help find the dimension.)

We illustrate our calculations with $K_n$ as a running example.  The data is from Section \ref{Kn}.  Let $m=\lfloor n/2 \rfloor$.  The number of $l$-cycles in $K_n$ that intersect a maximum matching $M_m$ in a fixed set of $k$ edges is
\[
G_{l}(k)=\binom{n-2k}{l-2k}(l-k-1)!\cdot2^{k-1}.
\] 

A column of $U$ or $R$ is not all zero if and only if it corresponds to a cycle length $l$ for which there exists an $l$-cycle in $\G$ that intersects $M_m$.  Such a column contains $m$ values of the polynomial $p_l(s)$.  
Since $p_l$ has degree at most $m$ and no constant term, these values determine $p_l$ completely.

Now a nonzero column in $U$ or $R$ for cycle length $l$ looks like this:
\begin{equation}
\begin{pmatrix}
p_l(1) \\ p_l(2) \\ \vdots \\ p_l(m)
\end{pmatrix}
=
\begin{pmatrix}
\alpha_l 1^{d_l}+\cdots \\ \alpha_l 2^{d_l}+\cdots \\ \vdots \\ \alpha_l m^{d_l}+\cdots
\end{pmatrix},
\label{E:column}
\end{equation}
since $p_l$ is a polynomial of degree $d_l$; here $\alpha_l = (-2)^{d_l-1} {G_l(d_l)}/{d_l!}$.

Suppose the set $\Delta_\odd:=\{d_3,d_5,\ldots,d_{n-1}\}$ has $\delta_\odd$ (distinct) elements and the set $\Delta_\even:=\{d_4,d_6,\ldots,d_n\}$ has $\delta_\even$ elements.  The number of polynomial degrees represented in the columns of $U$ is $\delta_\odd$ (which may be less than the number of nonzero columns), and similarly for $R$.

In $K_n$ with a maximum matching, $\Delta_\odd=\{3,5,\ldots\}$ (odd numbers up to $n$) and $\Delta_\even=\{4,6,\ldots\}$ (even numbers up to $n$).

\begin{lemma}\label{L:UR ranks}
The rank of $U$ is at least $\delta_\odd$ and that of $R$ is at least $\delta_\even$.  

The rank of $\begin{pmatrix} U \\ c_\odd \end{pmatrix}$ is $\mathrm{rk}(U)+1$ if there is an odd length $l$ such that an $l$-cycle exists in $\G$ but no $l$-cycle intersects $M_m$.
\end{lemma}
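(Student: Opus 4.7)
The plan is to prove each of the two assertions by exhibiting explicit witnesses: for the rank bound on $U$ (and $R$), a set of $\delta_\odd$ (resp.\ $\delta_\even$) columns of distinct polynomial degree that are linearly independent; for the second claim, a single column in which every row of $U$ vanishes but $c_\odd$ does not.

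For the inequality $\mathrm{rk}(U)\ge\delta_\odd$, I would fix, for each $d\in\Delta_\odd$, one odd length $l$ with $d_l=d$ and take the corresponding column $v_l$ of $U$. By \eqref{E:Ms}, the $s$-th coordinate of $v_l$ is $p_l(s)$, where $p_l$ is a polynomial in $s$ of exact degree $d_l$ with no constant term; the leading coefficient $\alpha_l$ is nonzero because $G_l(d_l)>0$ by definition of $d_l$. Let $v_{l_1},\dots,v_{l_{\delta_\odd}}$ be the selected columns, ordered so that $d_{l_1}<\dots<d_{l_{\delta_\odd}}$. Suppose a linear relation $\sum_i c_i v_{l_i}=0$ holds, and set $q(x):=\sum_i c_i p_{l_i}(x)$. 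Then $q$ has no constant term and also vanishes at $x=1,2,\dots,m$, giving $m+1$ zeros; but $\deg q\le\max_i d_{l_i}\le m$, so $q\equiv 0$. Because polynomials of pairwise distinct degrees are linearly independent, every $c_i=0$, so the columns $v_{l_i}$ are independent. The identical argument with even lengths yields $\mathrm{rk}(R)\ge\delta_\even$.

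For the second assertion, suppose an odd length $l^*$ has an $l^*$-cycle in $\G$ but no $l^*$-cycle meets $M_m$. Then $G_{l^*}(k)=0$ for every $k\ge 1$, so by \eqref{E:Ms} every entry of the $l^*$-column of $U$ is zero. The entry of $c_\odd$ in that same column equals the cycle count $c_{l^*}(\G)$, which is strictly positive by hypothesis. Thus $c_\odd$ has a nonzero coordinate in a position where every row of $U$ vanishes, so $c_\odd$ lies outside the row span of $U$, and appending it raises the rank by exactly one.

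The only substantive step is the Vandermonde-style argument that values at $1,\dots,m$ of polynomials of distinct degrees without constant term are linearly independent; everything else is bookkeeping against \eqref{E:Ms}. This is also the step that explains why the ``no constant term'' property (ensuring a free extra zero at $x=0$) combined with the upper bound $d_l\le m$ is exactly what one needs to convert the degrees in $\Delta_\odd$ and $\Delta_\even$ into honest lower bounds on rank, and hence why a \emph{maximum} permutable matching is the object one wants to use.
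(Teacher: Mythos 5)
Your proof is correct and follows essentially the same route as the paper: you select one column per distinct degree in $\Delta_\odd$ (resp.\ $\Delta_\even$) and show independence by exploiting that a polynomial of degree at most $m$ with no constant term is determined by its values at $1,\dots,m$ — you do this by root-counting where the paper builds an explicit Vandermonde matrix, but the underlying idea is identical. Your handling of the second claim (an all-zero column of $U$ sitting under a positive entry of $c_\odd$) is exactly the paper's argument.
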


\begin{proof}
In $U$ choose one column of each different degree $d_l$.  Divide by the leading coefficient $\alpha_l$; this does not affect the rank.  Now add columns of the form $\begin{pmatrix} l^d \end{pmatrix}_{s=1}^{m}$ for every $d=1,2,\ldots,m$ that is not in $\Delta_\odd$.  Column operations allow us to eliminate the lower-degree terms of the column \eqref{E:column}, leaving a Vandermonde matrix with $1^d$ in the top row and $m^d$ in the bottom row of column $d$ for each $d=1,2,\ldots,m$.  The rank of is $m$.  Now reverse the column operations; the rank remains the same, so the columns of $U$ must have full column rank.

The same reasoning applies to $R$.

The extra 1 in the rank of $\begin{pmatrix} U \\ c_\odd \end{pmatrix}$ arises from the fact that, under the assumption, it has a column that is zero in $U$ but is nonzero in $c_\odd$.
\end{proof}

By this lemma, for $K_n$ the ranks of $U$ and $R$ are $\lceil n/2 \rceil -1$ and $\lfloor n/2 \rfloor -1$, respectively, which sum to $n-2$.

\ssubsection{Theorems}

Lemma \ref{L:UR ranks} yields our principal general theorem.  Given a matching $M_m$ and a cycle length $l \in \Spec(\G)$, define
\[
\mu(l) := \max_{C_l} |C_l \cap M_m|,
\]
maximized over all $l$-cycles $C_l$.

\begin{theorem}\label{T:main}
Let $M_m$ be a permutable $m$-matching in  $\G$.  Then 
\begin{gather*}
|\{ \mu(l) : \text{odd } l \in \Spec(\G) \}| + |\{ \mu(l)>0 : \text{even } l \in \Spec(\G) \}| \\
\leq \dim\NCV(\G) \leq |\Spec(\G)|.
\end{gather*}

Suppose that every even cycle length, and all odd cycle lengths with at most one exception, are values of $\mu(l)$.  
Then $\NCV(\G)$ spans $\bbR^{|\Spec(\G)|}$.
\end{theorem}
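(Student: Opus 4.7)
The plan is to derive the theorem almost directly from Lemmas \ref{T:neg cycle matrix} and \ref{L:UR ranks}, so the work amounts to correctly translating the permutable-matching invariant $d_l$ into the cycle-intersection invariant $\mu(l)$ of the theorem. First I would observe that, by definition, $d_l$ is the largest $k$ for which $G_l(k)>0$, equivalently the maximum size of a subset of $M_m$ contained in some $l$-cycle. Thus $d_l = \mu(l)$ whenever an $l$-cycle meets $M_m$, while $\mu(l)=0$ exactly when $d_l$ is undefined and omitted from $\Delta_\odd$ or $\Delta_\even$. It follows immediately that
\[
\delta_\even = |\{\mu(l) > 0 : \text{even } l \in \Spec(\G)\}|,
\]
and that $|\{\mu(l) : \text{odd } l \in \Spec(\G)\}| = \delta_\odd + \epsilon$, where $\epsilon = 1$ if some odd $l \in \Spec(\G)$ has $\mu(l) = 0$ and $\epsilon = 0$ otherwise, since in the former case $0$ is an extra distinct element of the set.

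Next I would combine the two lemmas. Lemma \ref{T:neg cycle matrix} gives $\dim\NCV(\G) = \mathrm{rk}\begin{pmatrix} U \\ c_\odd \end{pmatrix} + \mathrm{rk}(R)$, while Lemma \ref{L:UR ranks} bounds these by $\delta_\odd + \epsilon$ and $\delta_\even$ respectively, the extra $+1$ in the first summand being triggered precisely when $\epsilon = 1$. Summing the two bounds yields the advertised lower bound on $\dim\NCV(\G)$. The upper bound $\dim\NCV(\G) \leq |\Spec(\G)|$ is immediate because each negative cycle vector lives in $\bbR^{\Spec(\G)}$.

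For the concluding ``moreover'' clause I would read the hypothesis as asserting that $\mu$ restricted to the even $l \in \Spec(\G)$ is injective with strictly positive values, and that $\mu$ restricted to the odd $l \in \Spec(\G)$ is injective with at most one value equal to $0$. Under these conditions both $|\{\mu(l)>0 : \text{even}\}|$ and $|\{\mu(l):\text{odd}\}|$ attain their maxima, namely the numbers of even and odd lengths in $\Spec(\G)$ respectively, so their sum equals $|\Spec(\G)|$ and the lower and upper bounds coincide. The only genuine subtlety is the bookkeeping of $\epsilon$: the permitted exception on the odd side is exactly what the extra $+1$ contributed by the $c_\odd$ row of Lemma \ref{L:UR ranks} is designed to absorb, so no further argument is required.
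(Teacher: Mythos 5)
Your proposal is correct and follows essentially the same route as the paper: identify $\mu(l)$ with the polynomial degree $d_l$ (with $\mu(l)=0$ exactly when $d_l$ is undefined), then combine Lemmas \ref{T:neg cycle matrix} and \ref{L:UR ranks}, the permitted odd exception being absorbed by the extra $+1$ from the $c_\odd$ row. One minor slip: Lemma \ref{T:neg cycle matrix} concerns the rank of the matrix \eqref{E:neg cycle matrix} built from the chosen signatures only, so it yields $\dim\NCV(\G) \geq \mathrm{rk}\begin{pmatrix} U \\ c_\odd \end{pmatrix} + \mathrm{rk}(R)$ rather than equality; since that is the direction the lower bound requires, the argument stands.
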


\begin{proof}
The value of $\mu(l)$ is the degree $d_l$ of the polynomials $p_l(s)$ if such a polynomial exists.  
The polynomial exists and $d_l$ is defined if and only if some $C_l \cap M_m \neq \emptyset$, in other words if and only if $\mu(l)>0$.  Thus, there is a value $\mu(l)=0$ for some odd $l \in \Spec(\G)$ if and only if $\mathrm{rk}\begin{pmatrix} U \\ c_\odd \end{pmatrix} = \mathrm{rk}(U)+1$.
\end{proof}

There is a simpler statement that applies to graphs with a sufficiently omnipresent permutable matching.  Given $m$, define $\nu_\odd(m) :=$ the number of odd lengths $l< 2m$ in $\Spec(\G)$, $+1$ if there is an odd cycle length $l\geq2m$, and define $\nu_\even(m) :=$ the number of even lengths $l< 2m$ in $\Spec(\G)$, $+1$ if there is an even cycle length $l\geq2m$.

\begin{theorem}\label{T:omni}
Suppose $M_m$ is a permutable $m$-matching in $\G$ and for every length $l \in \Spec(\G)$ there exists a cycle $C_l$ such that $|C_l\cap M_m| = \min( m, \lfloor l/2 \rfloor$).
Then $\dim\NCV(\G) \geq \nu_\odd(m)+\nu_\even(m)$.
\end{theorem}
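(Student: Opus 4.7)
The plan is to deduce Theorem \ref{T:omni} directly from Theorem \ref{T:main} by pinning down $\mu(l)$ exactly and then counting its distinct values across each parity. First I would note the trivial upper bound $\mu(l) \leq \min(m, \lfloor l/2 \rfloor)$, which holds for any matching $M_m$ and any cycle $C_l$: the matching has only $m$ edges, and each edge of $M_m$ that lies on $C_l$ uses two of the $l$ vertices of $C_l$, so at most $\lfloor l/2 \rfloor$ edges of $M_m$ fit inside $C_l$. Combined with the hypothesis that some $l$-cycle realizes $|C_l \cap M_m| = \min(m, \lfloor l/2 \rfloor)$ for every $l \in \Spec(\G)$, this forces the equality $\mu(l) = \min(m, \lfloor l/2 \rfloor)$.

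Next I would enumerate the distinct values. For odd $l \in \Spec(\G)$, $\lfloor l/2 \rfloor = (l-1)/2$, so $\mu(l) = (l-1)/2$ when $l \leq 2m-1$ and $\mu(l) = m$ when $l \geq 2m+1$. The values $(l-1)/2$ produced by $l \in \{3,5,\ldots,2m-1\} \cap \Spec(\G)$ are pairwise distinct and lie in $\{1,\ldots,m-1\}$, while the extra value $m$ appears exactly when some odd $l \geq 2m$ lies in $\Spec(\G)$. Thus $|\{\mu(l) : \text{odd } l \in \Spec(\G)\}|$ equals $\nu_\odd(m)$, and since each of these values is positive no odd term contributes a $\mu(l) = 0$. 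The analogous analysis for even $l$ gives $\mu(l) = l/2 \in \{2,\ldots,m-1\}$ when $l \leq 2m-2$ and $\mu(l) = m$ when $l \geq 2m$; all are strictly positive, and distinct-value counting yields $|\{\mu(l)>0 : \text{even } l \in \Spec(\G)\}| = \nu_\even(m)$. Feeding these two counts into the lower bound in Theorem \ref{T:main} then gives $\dim\NCV(\G) \geq \nu_\odd(m) + \nu_\even(m)$.

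I do not anticipate any genuine obstacle. The only care needed is in aligning the parity thresholds: $l < 2m$ means $l \leq 2m-1$ for odd $l$ and $l \leq 2m-2$ for even $l$, and in each case this is precisely where $\min(m, \lfloor l/2 \rfloor)$ switches from $\lfloor l/2 \rfloor$ to $m$, so the two-part definition of $\nu_\odd(m)$ and $\nu_\even(m)$ matches the two-branch structure of $\mu(l)$ perfectly. The whole argument is essentially a bookkeeping exercise bridging the hypothesis, the identification of $\mu(l)$ with the polynomial degree $d_l$ made in the proof of Theorem \ref{T:main}, and the definitions of $\nu_\odd$ and $\nu_\even$.
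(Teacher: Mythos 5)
Your proposal is correct and follows essentially the same route as the paper: identify $d_l=\mu(l)=\min(m,\lfloor l/2\rfloor)$ from the hypothesis, count the distinct values by parity to recover $\nu_\odd(m)$ and $\nu_\even(m)$, and invoke Theorem \ref{T:main}. Your explicit justification of the upper bound $\mu(l)\leq\min(m,\lfloor l/2\rfloor)$ is a small added detail the paper leaves implicit, but the argument is the same.
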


The hypothesis can be lessened since, if there is any cycle length $l\geq2m$, it suffices to have one length $l\geq 2m$ for which there is a $C_l$ with $|C_l\cap M_m| = m$.

\begin{proof}
The hypotheses imply that
\[
d_l = \begin{cases}
\lfloor l/2 \rfloor &\text{ if } l \leq 2m, \\
m &\text{ if } l \geq 2m.
\end{cases}
\]
We count the number of distinct values $d_l$ for odd and even cycle lengths.  For odd $l$ we get $(l-1)/2$ if $l\in\Spec(\G)$ and $l<2m$, and we get $m$ if and only if there exists a cycle length $l\geq 2m$.  The total is $\nu_\odd$.  The computation of $\nu_\even$ is similar.

The values of $\mu(l)$ in Theorem \ref{T:main} are the same as those of $d_l$ unless there is a cycle length for which no $l$-cycle intersects $M_m$; but that is ruled out by our hypotheses.  Theorem \ref{T:omni} follows.
\end{proof}

A graph is \emph{bipancyclic} if it is bipartite and has a cycle of every even length from 4 to $n$.

\begin{corollary}\label{C:pan}
Assume $\G$ is pancyclic and has a permutable $m$-matching $M_m$, and for every $l$ with $3\leq l\leq n$ there is an $l$-cycle $C_l$ with $|C_l\cap M_m| = \min( m, \lfloor l/2 \rfloor$).  Then $\dim\NCV(\G) = n-2$ if $2m \geq n-1$, and $n-2\geq\dim \NCV(\G) \geq 2m-1$ if $2m\leq n-2$.

Assume $\G$ is bipancyclic and has vertex class sizes $p, q$ with $p\leq q$, and it has a permutable $m$-matching $M_m$ such that for every $k$ with $2\leq k\leq p$ there is a $2k$-cycle $C_{2k}$ with $|C_{2k} \cap M_m| = \min( m, k)$.  
Then $\dim\NCV(\G) = p-1$ if $m = p$, and $p-1\geq\dim \NCV(\G) \geq m-1$ if $m\leq p-1$.
\end{corollary}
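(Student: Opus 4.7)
The plan is to invoke Theorem~\ref{T:omni} directly: the hypothesis on $|C_l \cap M_m|$ in both halves of the corollary is tailored so as to match the hypothesis of Theorem~\ref{T:omni}, so each half reduces to computing $\nu_\odd(m) + \nu_\even(m)$ and comparing it with the trivial upper bound $|\Spec(\G)|$. The trivial upper bound $n-2$ (resp.\ $p-1$) comes from the remark in Section~\ref{rankdim} that $\dim\NCV(\G) \leq |\Spec(\G)|$.

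For the pancyclic case, $\Spec(\G) = \{3,4,\ldots,n\}$, so $|\Spec(\G)| = n-2$. Since $m \leq \lfloor n/2 \rfloor$ for any matching, the inequality $2m \geq n-1$ actually forces either $2m=n$ (when $n$ is even, $m=n/2$) or $2m=n-1$ (when $n$ is odd, $m=(n-1)/2$). In each of these two boundary subcases I would count directly: all odd lengths $3,5,\ldots$ in $\Spec(\G)$ are $<2m$ except possibly the largest (which, if $\geq 2m$, still contributes via the $+1$ in the definition of $\nu_\odd$), and similarly for even lengths, yielding $\nu_\odd+\nu_\even = n-2$. Matching the upper bound gives equality. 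For the complementary case $2m \leq n-2$, both an odd and an even cycle length $\geq 2m$ exist in $\Spec(\G)$, so the $+1$ terms both fire; adding the counts of odd lengths in $\{3,5,\ldots,2m-1\}$ and even lengths in $\{4,6,\ldots,2m-2\}$ gives $\nu_\odd+\nu_\even = m + (m-1) = 2m-1$, which by Theorem~\ref{T:omni} is a lower bound on $\dim\NCV(\G)$.

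For the bipancyclic case, $\Spec(\G) = \{4,6,\ldots,2p\}$ and $|\Spec(\G)| = p-1$. Since every cycle is even, $\nu_\odd(m) = 0$ by definition. If $m=p$ then every even length in $\Spec(\G)$ is $\leq 2m$, and the $+1$ in $\nu_\even$ is triggered by $l=2p$, giving $\nu_\even = (p-2)+1 = p-1$ and equality $\dim\NCV(\G)=p-1$. If $m \leq p-1$ then $2m \leq 2p-2$, so $2m \in \Spec(\G)$ triggers the $+1$ and we get $\nu_\even = (m-2)+1 = m-1$, the stated lower bound.

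The argument is essentially bookkeeping on top of Theorem~\ref{T:omni}; the only place where one must be careful is the boundary conditions, namely whether the endpoint length $2m$ itself lies in $\Spec(\G)$ and whether the $+1$ contribution in $\nu_\odd$ or $\nu_\even$ is present. The case split $2m \geq n-1$ versus $2m \leq n-2$ (respectively $m=p$ versus $m \leq p-1$) is precisely designed so that these boundary questions have uniform answers within each case, so no further subtlety is required beyond checking each case with the definitions of $\nu_\odd$ and $\nu_\even$.
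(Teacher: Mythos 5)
Your proposal is correct and follows essentially the same route as the paper: both reduce the corollary to Theorem~\ref{T:omni} and then compute $\nu_\odd(m)+\nu_\even(m)$ by counting the relevant odd and even lengths below $2m$ plus the possible $+1$ contributions, splitting on whether $2m\geq n-1$ or $2m\leq n-2$ (respectively $m=p$ or $m\leq p-1$). The paper is merely terser, stating the combined sum $\nu_\odd+\nu_\even = 2m-1$ or $2m-2$ according as $n>2m$ or $n=2m$ and remarking that the conclusion follows; your bookkeeping matches this exactly.
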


The hypotheses can be lessened in the same way as those of Theorem \ref{T:omni}.

\begin{proof}
If $\G$ is pancyclic, $\nu_\odd$ counts all the numbers $3,5,\ldots,2m-1$ plus 1 for $2m+1$ if $n>2m$, and $\nu_\even$ counts the numbers $4,6,\ldots,2m-2$ plus 1 for $2m$ since $n\geq 2m$.  Thus 
\[
\nu_\odd+\nu_\even = \begin{cases}
(m)+(m-1) = 2m-1 &\text{ if } n > 2m, \\
(m-1)+(m-1) = 2m-2 &\text{ if } n=2m.
\end{cases}
\]
The conclusion follows easily.

If $\G$ is bipancyclic, then $\nu_\even=m-1$ and the conclusion follows easily.
\end{proof}

The two most complete graphs are easy consequences of any of the preceding results, but especially of Corollary \ref{C:pan}.

\begin{corollary}\label{C:Kn Kpq}
For a complete graph $K_n$ with $n\geq 3$, $\dim \NCV(K_n) = n-2.$

For a complete bipartite graph $K_{p,q}$ with $p,q \geq 2$, $\dim \NCV(K_{p,q}) = \min(p,q)-1.$
\end{corollary}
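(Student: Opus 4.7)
The plan is to derive both equalities directly from Corollary~\ref{C:pan}. That corollary requires three ingredients on $\G$: (i) pancyclicity (or bipancyclicity), (ii) a permutable $m$-matching $M_m$ of appropriate size, and (iii) for every relevant cycle length $l$, an $l$-cycle $C_l$ with $|C_l\cap M_m|=\min(m,\lfloor l/2\rfloor)$ (or $\min(m,k)$ in the bipartite version). In both cases the graph, the matching, and the constructions are natural enough that the argument reduces to a short checklist.

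For $K_n$ we take $M_m$ to be a maximum matching, so $m=\lfloor n/2\rfloor$ and $2m\ge n-1$. Pancyclicity is immediate. For permutability, given two matched pairs $\{u_i,v_i\}$ and $\{u_j,v_j\}$, the vertex permutation swapping $u_i\leftrightarrow u_j$ and $v_i\leftrightarrow v_j$ (fixing every other vertex) is an element of $\Aut(K_n)=S_n$ that transposes the matching edges $u_iv_i$ and $u_jv_j$; since such edge-level transpositions generate $S_m$, the matching is permutable. For (iii), given $l$ with $3\le l\le n$, we set $k:=\min(m,\lfloor l/2\rfloor)$, pick any $k$ edges of $M_m$ together with any $l-2k$ further vertices, and assemble them into an $l$-cycle that uses all $k$ chosen matching edges — the required connecting edges are automatically present in $K_n$. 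Corollary~\ref{C:pan} then yields $\dim\NCV(K_n)=n-2$.

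For $K_{p,q}$ with $p\le q$ we take $M_m$ to be a matching of size $m=p$ saturating the smaller part. Bipancyclicity is standard. Permutability holds because $\Aut(K_{p,q})$ contains $S_p\times S_q$, and the simultaneous transposition of two matched pairs (one swap on each side) transposes the corresponding matching edges; these transpositions again generate $S_m=S_p$. For (iii), for each $k$ with $2\le k\le p$ we construct a $2k$-cycle using $k$ matching edges by listing $k$ matched pairs $(u_i,v_i)$ cyclically and alternating them with the cross-edges $v_iu_{i+1}$, all of which lie in $K_{p,q}$. Corollary~\ref{C:pan} (bipancyclic case with $m=p$) then gives $\dim\NCV(K_{p,q})=p-1=\min(p,q)-1$.

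The only step that needs any thought is the verification of permutability, and in both cases this reduces to the observation that the automorphism group is rich enough to realize an arbitrary transposition of two matched pairs while fixing every other edge of $M_m$; everything else is a routine construction in a complete (bipartite) graph, so no genuine obstacle arises.
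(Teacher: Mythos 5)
Your proposal is correct and follows essentially the same route as the paper, which derives this corollary from Corollary~\ref{C:pan} by taking a maximum matching in $K_n$ (resp.\ a matching saturating the smaller side of $K_{p,q}$) and verifying permutability and the cycle-intersection hypothesis exactly as you do. The explicit checks you supply (transpositions of matched pairs generating $S_m$, and the alternating-cycle constructions) are the details the paper leaves to the reader.
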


\ssection{Examples}

\ssubsection{The Compleat Complete Graph}\label{Kn}

We need to supply a missing computation for $K_{n}$.  But first, let us see the negative cycle vectors of all signings of small complete graphs.

The vectors for $K_{3}$ are
\[
(0), \ (1)
\]
(from the balanced and unbalanced triangle).  The vectors for $K_{4}$ are
\[
(0,0), \ (2,2), \ (4,0)
\]
(the all-positive graph, one negative edge, and two nonadjacent negative edges). Here are the vectors for $K_{5}$:
\[
(0,0,0), \ (3,6,6), \ (4,8,8), \ (5,10,6), \ (6,8,4), \ (7,6,6), \ (10,0,12);
\]
and for $K_{6}$:
\[
\begin{array}{llll}
(0,0,0,0), &(4,12,24,24), & (6,18,36,36), &(8,20,32,24), \\
(10,18,36,36), &(8,24,40,32), & (10,22,36,28), &(12,24,24,32), \\
(10,26,36,28), &(8,24,48,32), & (14,18,36,36), &(12,24,32,32), \\
(12,20,40,24), &(10,30,36,20), & (16,12,48,24), &(20,0,72,0). 
\end{array}
\]

The number of switching isomorphism classes of complete graphs grows super-exponentially \cite{MallowsSloane}.  Since two signed graphs which yield different vectors must belong to different classes, one naturally wonders about the converse property, that the vector uniquely identifies a switching class.  This is true up through $K_{7}$ but false for $K_{8}$: see Figure \ref{uniquenesscounterex} below (found by Gary Greaves, whose assistance we greatly appreciate).  Thus when $n\geq8$ there are (certainly when $n=8$ and surely also for all larger orders) fewer vectors than classes, but in general there will still be a very large number.

\begin{figure}[ht!!!]\label{uniquenesscounterex}
\begin{multicols}{2}
\begin{center}
\begin{tikzpicture}[scale=.3]

\node[fill, shape=circle] (1) at (247.5:7) {};
\node[fill, shape=circle] (2) at (292.5:7) {};
\node[fill, shape=circle] (3) at (202.5:7) {};
\node[fill, shape=circle] (4) at (337.5:7) {};
\node[fill, shape=circle] (5) at (157.5:7) {};
\node[fill, shape=circle] (6) at (22.5:7) {};
\node[fill, shape=circle] (7) at (112.5:7) {};
\node[fill, shape=circle] (8) at (67.5:7) {};

\draw[line width=2pt, style=loosely dashed] (1)--(2) ;
\draw[line width=2pt, style=loosely dashed] (7)--(8) ;
\draw[line width=2pt, style=loosely dashed] (1)--(5) ;
\draw[line width=2pt, style=loosely dashed] (1)--(3) ;
\draw[line width=2pt, style=loosely dashed] (2)--(8) ;
\draw[line width=2pt, style=loosely dashed] (3)--(5) ;
\draw[line width=2pt, style=loosely dashed] (6)--(8) ;
\draw[line width=2pt, style=loosely dashed] (5)--(7) ;
\draw[line width=2pt] (1)--(4) ;
\draw[line width=2pt] (1)--(6) ;
\draw[line width=2pt] (1)--(7) ;
\draw[line width=2pt] (1)--(8) ;
\draw[line width=2pt] (2)--(3) ;
\draw[line width=2pt] (2)--(4) ;
\draw[line width=2pt] (2)--(5) ;
\draw[line width=2pt] (2)--(6) ;
\draw[line width=2pt] (2)--(7) ;
\draw[line width=2pt] (3)--(4) ;
\draw[line width=2pt] (3)--(6) ;
\draw[line width=2pt] (3)--(7) ;
\draw[line width=2pt] (3)--(8) ;
\draw[line width=2pt] (4)--(5) ;
\draw[line width=2pt] (4)--(6) ;
\draw[line width=2pt] (4)--(7) ;
\draw[line width=2pt] (4)--(8) ;
\draw[line width=2pt] (5)--(6) ;
\draw[line width=2pt] (5)--(8) ;
\draw[line width=2pt] (6)--(7) ;

\end{tikzpicture}\\
\columnbreak
\begin{tikzpicture}[scale=.3]

\node[fill, shape=circle] (1) at (247.5:7) {};
\node[fill, shape=circle] (2) at (292.5:7) {};
\node[fill, shape=circle] (3) at (202.5:7) {};
\node[fill, shape=circle] (4) at (337.5:7) {};
\node[fill, shape=circle] (5) at (157.5:7) {};
\node[fill, shape=circle] (6) at (22.5:7) {};
\node[fill, shape=circle] (7) at (112.5:7) {};
\node[fill, shape=circle] (8) at (67.5:7) {};

\draw[line width=2pt, style=loosely dashed] (1)--(2) ;
\draw[line width=2pt, style=loosely dashed] (1)--(5) ;
\draw[line width=2pt, style=loosely dashed] (1)--(7) ;
\draw[line width=2pt, style=loosely dashed] (2)--(6) ;
\draw[line width=2pt, style=loosely dashed] (2)--(8) ;
\draw[line width=2pt, style=loosely dashed] (5)--(7) ;
\draw[line width=2pt, style=loosely dashed] (6)--(8) ;
\draw[line width=2pt, style=loosely dashed] (7)--(8) ;
\draw[line width=2pt] (1)--(3) ;
\draw[line width=2pt] (1)--(4) ;
\draw[line width=2pt] (1)--(6) ;
\draw[line width=2pt] (1)--(8) ;
\draw[line width=2pt] (2)--(3) ;
\draw[line width=2pt] (2)--(4) ;
\draw[line width=2pt] (2)--(5) ;
\draw[line width=2pt] (2)--(7) ;
\draw[line width=2pt] (3)--(4) ;
\draw[line width=2pt] (3)--(5) ;
\draw[line width=2pt] (3)--(6) ;
\draw[line width=2pt] (3)--(7) ;
\draw[line width=2pt] (3)--(8) ;
\draw[line width=2pt] (4)--(5) ;
\draw[line width=2pt] (4)--(6) ;
\draw[line width=2pt] (4)--(7) ;
\draw[line width=2pt] (4)--(8) ;
\draw[line width=2pt] (5)--(6) ;
\draw[line width=2pt] (5)--(8) ;
\draw[line width=2pt] (6)--(7) ;

\end{tikzpicture}\\
\end{center}
\end{multicols}
\caption{Two switching inequivalent signings of $K_{8}$ with the same negative cycle vector $(28,108,336,848,1440,1248)$.}
\end{figure}
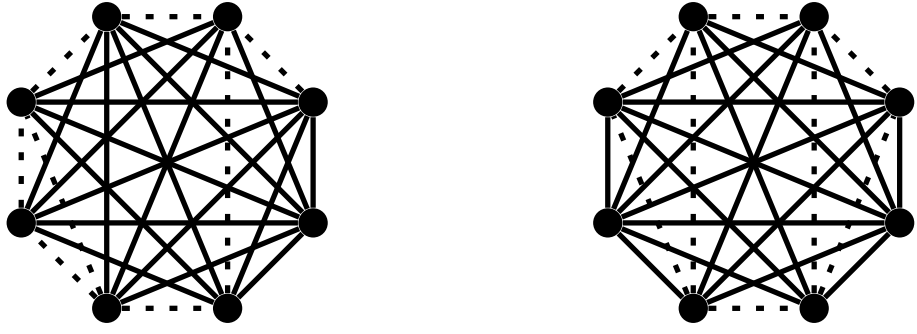

Now we carry out the missing computation of the function $G_{l}$ of Section \ref{pnegm}.  
Consider the signed $K_{n}$'s whose negative edges are $s$ nonadjacent edges, for $0\leq s\leq \lfloor{n}/{2}\rfloor$.
It is straightforward to compute $g_{l}$. For a fixed $k\geq1$ and set $Y$ with $|Y|=k$, we need to form an $l$-cycle using $Y$ and $l-k$ other edges.  (Since $Y$ is a matching, we know that $l\geq2k$.)  
So we choose $l-2k$ of the remaining $n-2k$ vertices, and then create our cycle as follows: imagine contracting the edges in $Y$; the resultant vertices, together with the other $l-2k$ vertices, will form an $l-k$-cycle in the contracted graph (which will eventually give an $l$-cycle in $K_n$). 
Cyclically order these $l-k$ ``vertices''; this orders the vertices in our actual cycle while ensuring the edges from $Y$ remain. There are $\frac{(l-k-1)!}{2}$ ways to do this. Then, we expand the contracted edges to regain them; there are 2 ways to do this for each edge. So we have 
\[g_{l}(Y)=\binom{n-2k}{l-2k}(l-k-1)!\cdot2^{k-1},\] 
whence
\[G_{l}(k)=\binom{n-2k}{l-2k}(l-k-1)!\cdot2^{k-1}.\] 

By Equation \eqref{E:Ms}, $c^{-}_{l}(s)$ is a polynomial in $s$ of degree $d_l=\lfloor l/2 \rfloor$ and the general formula is
\begin{align*}
c^{-}_{l}(s) 
& =\sum_{k=1}^{n}\binom{s}{k} (-4)^{k-1}\binom{n-2k}{l-2k}(l-k-1)!, 
\end{align*}
For example, $c^{-}_{3}(s)=s(n-2)$ and $c^{-}_{4}(s)=s(n^{2}+5n+8)-2s^{2}$.  
This formula for $c^{-}_{l}(s)$ demonstrates that the degrees $d_l$ of the odd polynomials are all distinct, and the same for the even polynomials; consequently our main theorem \ref{T:main} itself implies that the matrix of negative cycle vectors $c^-(s)$ has full rank $n-2$.

\ssubsection{Complete Bipartite Graphs}

We move along to $K_{p,q}$, which always has $p\leq q$.  We use a maximum matching $M_p$, i.e., we set $m=p$.

To get $c^-_{2l}(K_{p,q})$ we compute $g_{2l}$ (where the subscript is now $2l$ because all cycles have even length).  Call the two independent vertex sets $A=\{a_1,\ldots,a_p\}$ and $B=\{b_1,\ldots,b_q\}$. 
For a fixed $k$-edge set $Y=\{ a_{i_1}b_{j_1}, \ldots, a_{i_k}b_{j_k} \} \subseteq M_p$, where $k\leq l$, we need to form a $2l$-cycle using $Y$ and $2l-2k$ other vertices.  Fix one edge $y_1 \in Y$, say $y_1=a_{i_1}b_{j_1}$.  
Choose $l-k$ of the remaining $p-k$ vertices from $A$, in order, in one of $(p-k)_{l-k}$ ways; $l-k$ of the remaining $q-k$ vertices from $B$, also in order, in one of $(q-k)_{l-k}$ ways; and shuffle the sequences together as $(a_{i_{k+1}}, b_{j_{k+1}}, \ldots, a_{i_{l}}, b_{j_{l}})$. 
Insert $Y$ into this $2(l-k)$-sequence by inserting $y_1$ before $a_{i_{k+1}}$ (which we may do because each $Y$ edge must be between an $A$ vertex and a $B$ vertex), treating the resulting sequence as cyclically ordered (which can be done in only one way since the $A$ neighbor of $y_1$ appears after $y_1$); then ordering $Y \setminus \{y_1\}$ in one of $(k-1)!$ ways as $(y_2, \ldots,y_k)$; and finally inserting $y_2,\ldots,y_k$ anywhere into the cycle in one of 
$$\binom{[2(l-k)+1]+[k-1]-1}{[2(l-k)+1]-1}=\binom{2l-k-1}{k-1}$$
ways. 
(When those edges are inserted into the cycle, there is only one way to orient each edge.)  
The net result is that 
\[
G_{2l}(k)=g_{2l}(Y)=(p-k)_{l-k}(q-k)_{l-k} \cdot (k-1)!\binom{2l-k-1}{k-1}.
\] 
Then by Equation \eqref{E:Ms}, for $2\leq l \leq p$,
\begin{align*}
&c_{2l}^-(s)   
= \sum_{k=1}^{p} (s)_k \frac{(-2)^{k-1}}{k} (p-k)_{l-k}(q-k)_{l-k} \binom{2l-k-1}{k-1}.
\end{align*}
This explicit formula for the negative cycle vectors $c^-(s)$, with Theorem \ref{T:main}, implies that $\dim\NCV(K_{p,q})=p=\min(p,q)$.

\ssubsection{The Petersen graph}\label{X:P}

Next we consider the Petersen graph $P$, which has four cycle lengths, 5, 6, 8, and 9, so $\dim\NCV(P) \leq 4$.  
It lacks a permutable $4$-matching.  In fact:  

\begin{theorem}\label{T:cubic}
A $3$-regular graph that is arc transitive cannot have a permutable $4$-matching.
\end{theorem}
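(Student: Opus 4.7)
The plan is to combine a structural fact about $S_4$ with a local fact about $3$-regular graphs. Suppose toward a contradiction that $M = \{e_1, e_2, e_3, e_4\}$ with $e_i = u_iv_i$ is a permutable $4$-matching in $\G$, and let $H \leq \Aut(\G)$ be the setwise stabilizer of $M$, so that the restriction $\varphi : H \twoheadrightarrow \mathrm{Sym}(M) \cong S_4$ is surjective with kernel $K$. The key observation is that every $3$-cycle in $S_4$ fixes one of the four points, so it will suffice to produce $\rho \in H$ of order exactly $3$ acting on $M$ as a $3$-cycle, and then derive a contradiction from its action on the fixed matching edge.

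For the local step I would first establish the following \emph{propagation lemma}: in a connected $3$-regular graph, any order-$3$ automorphism that fixes two adjacent vertices must be trivial. Indeed, at any fixed vertex $x$ the action on $N(x)$ is a permutation of a $3$-set of order dividing $3$, hence either trivial or a $3$-cycle; if some neighbor of $x$ is already fixed, the $3$-cycle alternative is ruled out, and all three neighbors of $x$ are fixed. Iterating along a BFS from $x$ forces the automorphism to fix every vertex of $\G$.

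To produce $\rho$, I would show that $K$ has no $3$-torsion. Each element of $K$ preserves every $e_i$ setwise, so its action on the vertex set $V(M)$ factors through $\prod_{i=1}^{4}\mathrm{Sym}\{u_i,v_i\} \cong (\mathbb{Z}/2)^4$. Hence any order-$3$ element of $K$ fixes $V(M)$ pointwise; in particular it fixes the adjacent pair $u_1, v_1$, and by the propagation lemma it is trivial. Thus a Sylow $3$-subgroup $P$ of $H$ meets $K$ trivially, so $\varphi|_P$ is injective into a Sylow $3$-subgroup of $S_4$; since $\varphi$ is surjective we get $P \cong \mathbb{Z}/3$, and any generator $\rho$ of $P$ acts on $M$ as a $3$-cycle. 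After relabeling, $\rho$ fixes $e_4$ setwise, and because $\rho$ has order $3$ while $\mathrm{Sym}\{u_4,v_4\} \cong \mathbb{Z}/2$, it fixes $u_4$ and $v_4$ pointwise. Applying the propagation lemma to the adjacent pair $u_4, v_4$ yields $\rho = \mathrm{id}$, contradicting that $\rho$ permutes $e_1, e_2, e_3$ nontrivially.

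The main obstacle is the bookkeeping around the extension $1 \to K \to H \to S_4 \to 1$; the arithmetic point $\gcd(2,3)=1$ is what really drives the argument, through both the triviality of the $(\mathbb{Z}/2)^4$-action for $3$-elements of $K$ and the fixed-endpoint consequence for $\rho$. Arc-transitivity of $\G$ is used only to guarantee connectedness; the substance of the proof is the interplay between $3$-regularity and the fact that every $3$-cycle in $S_4$ has a fixed point.
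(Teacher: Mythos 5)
Your argument is correct, and it takes a genuinely different route from the paper's: the paper disposes of this theorem in one line by citing Theorem 1.1 of Schaefer--Swartz \cite{SS2017} (an arc-transitive graph with a permutable $m$-matching, $m\geq4$, must have degree at least $m$), whereas you give a self-contained group-theoretic proof. Your two ingredients check out: an order-$3$ element of the kernel $K$ acts on $V(M)$ through $(\mathbb{Z}/2)^4$ and hence fixes $V(M)$ pointwise, and your propagation lemma is sound (in the BFS induction each newly reached vertex is fixed and has a fixed neighbor, namely its parent, so the order-$3$ automorphism dies on the whole component). The Sylow bookkeeping then correctly produces an order-$3$ element $\rho$ acting as a $3$-cycle on $M$, and the pointwise-fixed fourth edge kills $\rho$. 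What your approach buys is that arc-transitivity is never actually used: you prove the stronger statement that no \emph{connected} cubic graph has a permutable $4$-matching. The one caveat is your closing claim that arc-transitivity is ``used only to guarantee connectedness'': it does not guarantee it unless connectedness is built into the definition. Indeed, the disjoint union of four copies of $K_4$ is $3$-regular and arc-transitive in the broad sense, and one edge chosen from each copy is a permutable $4$-matching, so connectedness is genuinely needed and must be assumed (as the theorem statement implicitly does for the Petersen and Heawood graphs). With connectedness assumed outright rather than derived, your proof stands and is more informative than the paper's citation.
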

  
\begin{proof}
By \cite[Theorem 1.1]{SS2017} an arc-transitive graph with a permutable $m$-matching, where $m\geq4$, must have degree at least $m$.
\end{proof}

The Petersen graph does have a permutable 3-matching, in fact, two kinds.  

The first kind consists of alternate edges of a $C_6$.    
In the language of Theorem \ref{T:main}, we must compute $\mu(l)=|\max\{C_{l}\cap M_{3}\}|$ for each cycle length.  
We find with little difficulty that $\mu(5)=2$, $\mu(6)=3$,  $\mu(8)=2$, and $\mu(9)=3$.  Therefore $|\Delta_{\odd}|=2$ and $|\Delta_{\even}|=2$, whence, despite only having a $3$-matching, we can deduce that $\dim\NCV(P)=4$.
We even know the negative cycle vectors corresponding to negative $0$-, $1$-, $2$-, and $3$-submatchings and the negated signatures; they are (in order of matching size)
\[
\begin{array}{cccc}
(0,0,0,0), &(4,4,8,12), &(6,6,8,10), &(6,10,0,10)\phantom{.} \\
(12,0,0,20), &(8,4,8,8), &(6,8,8,10), &(6,10,0,10).
\end{array}
\]
The bottom vector in each column corresponds to the negated signing.

The second kind of permutable 3-matching consists of three edges at distance 3.  
The first matching type also is three equally spaced edges in a $C_9$, but not every such subset of a $C_9$ is also a set of alternating edges of a $C_6$; the other such subsets are 3-matchings of the second kind.  
This second kind generates negative cycle vectors from negative submatchings and the corresponding negated sign functions whose dimension is only 3, not 4.  (With this matching the negated signatures are switching isomorphic to unnegated signatures.)  This shows that not all permutable $m$-matchings in a graph are equally useful.

\ssubsection{The Heawood graph}\label{X:H}

The Heawood graph $H$ is bipartite and has five cycle lengths, 6, 8, 10, 12, and 14, so $\dim\NCV(H) \leq 5$.  It has a permutable 3-matching, indeed three different kinds, for instance alternate edges of a 6-cycle.  
Using that 3-matching we find that $\mu(6)=3$ (obviously), $\mu(8)=2$, $\mu(10)=3$, $\mu(12)=3$, and $\mu(14)=3$.  
These are two different values, thus $\dim\NCV(H)\geq 2$.  The results for the other two kinds of permutable 3-matching are the same except that $\mu(6)=2$.  In every case $\mu$ has two values.

Our matching method, in principle, cannot prove more because $H$ has no permutable 4-matching (see Theorem \ref{T:cubic}).  Nonetheless we suspect the dimension equals $|\Spec(H)|$.


\ssubsection{Other graphs with permutable perfect matchings, and the cube}

Schaefer and Swartz found all graphs that have a permutable perfect matching.  Besides $K_n$ and $K_{p,p}$ they are the hexagon $C_6$, the octahedron graph $O_3$, and three general examples:  the join $K_p \vee \overline K_p$ of a complete graph with its complement, the \emph{matching join} $K_p \vee_M K_p$ obtained from two copies of $K_p$ by inserting a perfect matching between the two copies, and the matching join $K_p \vee_M \overline K_p$, obtained by hanging a pendant edge from each vertex of $K_p$.

Our treatment of them leads us to one other family, the cyclic prisms $C_p \operatorname{\square} K_2$.

\subsubsection{The simple four}

Trivially, $\dim\NCV(C_6)=1=|\Spec(C_6)|$.

It is easy to verify by hand that $O_3$ satisfies the conditions of Corollary \ref{C:pan}, so $\dim\NCV(O_3)=|\Spec(O_3)|=4$.

As for $K_p \vee_M \overline K_p$, since the pendant edges contribute nothing to cycles,  
\[
\Spec(K_p \vee_M \overline K_p)=\Spec(K_p) \text{ and } \NCV(K_p \vee_M \overline K_p)=\NCV(K_p);
\]
thence $\dim\NCV(K_p \vee_M \overline K_p)=|\Spec(K_p \vee_M \overline K_p)|=p$.

It is also easy to show that $K_p \vee \overline K_p$ satisfies the conditions of Corollary \ref{C:pan}.  Thus, $\dim\NCV(K_p \vee \overline K_p)=|\Spec(K_p \vee \overline K_p)|=2p$.

\subsubsection{The matching join $K_p \vee_M K_p$}

This graph is pancyclic, but its permutable matchings are peculiar.  One kind is any matching in a $K_p$.  A maximum matching $M_{\lfloor p/2 \rfloor}$ in $K_p$, for which $\mu(l)=\min(p,\lfloor l/2 \rfloor)$, hence $\dim\NCV(K_p \vee_M K_p) \geq p$ by reasoning similar to that for $K_p$.  The matching $M^{\vee}_p$ that joins the copies of $K_p$ also prevents a permutable matching from having edges in both copies.  The only other permutable matchings are subsets of $M^{\vee}_p$.  This matching only generates $\lfloor p/2 \rfloor$ switching nonisomorphic signatures since negating a subset of $M^{\vee}_p$ switches to negating the complementary subset.  By itself, therefore, choosing our grand matching $M_m$ to be $M^{\vee}_p$ does not give a better lower bound than $p$.  
Nonetheless we feel the dimension is likely to be $n-2=2p-2$.

The smallest case, $K_3 \vee_M K_3$, is the triangular prism.  The cycle count vector is $(c_3,c_4,c_5,c_6)=(2,3,6,3)$.  There are four unbalanced signatures; see Figure \ref{F:3M3}.  
The negative cycle vectors are linearly independent so $\dim\NCV(K_3 \vee_M K_3) = |\Spec(v)|$, in agreement with Conjecture \ref{Conj:S}.

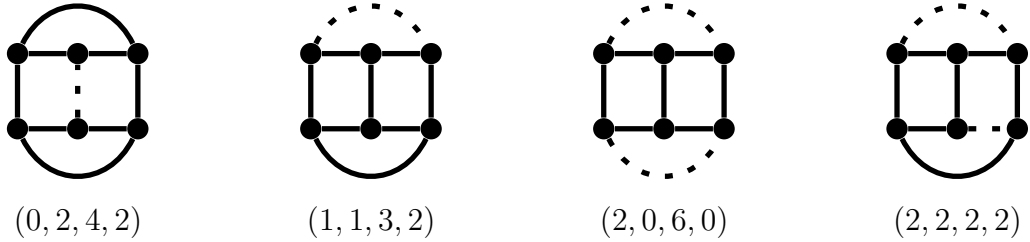
\begin{figure}[ht]
\begin{multicols}{4}
\begin{center}
\begin{tikzpicture}[scale=.2]
\tikzstyle{every node} = [inner sep=.1cm]

\node[fill, shape=circle] (1) at (0,0) {};
\node[fill, shape=circle] (2) at (4,0) {};
\node[fill, shape=circle] (3) at (8,0) {};
\node[fill, shape=circle] (4) at (0,5) {};
\node[fill, shape=circle] (5) at (4,5) {};
\node[fill, shape=circle] (6) at (8,5) {};

\draw[line width=2pt] (1)--(2) ;
\draw[line width=2pt] (2)--(3) ;
\draw[line width=2pt] (1) .. controls(2,-4) and (6,-4) .. (3) ;
\draw[line width=2pt] (4)--(5) ;
\draw[line width=2pt] (5)--(6) ;
\draw[line width=2pt] (4) .. controls(2,9) and (6,9) .. (6) ;
\draw[line width=2pt] (1)--(4) ;
\draw[line width=2pt, style=loosely dashed] (2)--(5) ;
\draw[line width=2pt] (3)--(6) ;

\end{tikzpicture} $(0,2,4,2)$

\columnbreak
\begin{tikzpicture}[scale=.2]
\tikzstyle{every node} = [inner sep=.1cm]

\node[fill, shape=circle] (1) at (0,0) {};
\node[fill, shape=circle] (2) at (4,0) {};
\node[fill, shape=circle] (3) at (8,0) {};
\node[fill, shape=circle] (4) at (0,5) {};
\node[fill, shape=circle] (5) at (4,5) {};
\node[fill, shape=circle] (6) at (8,5) {};

\draw[line width=2pt] (1)--(2) ;
\draw[line width=2pt] (2)--(3) ;
\draw[line width=2pt] (1) .. controls(2,-4) and (6,-4) .. (3) ;
\draw[line width=2pt] (4)--(5) ;
\draw[line width=2pt] (5)--(6) ;
\draw[line width=2pt, style=loosely dashed] (4) .. controls(2,9) and (6,9) .. (6) ;
\draw[line width=2pt] (1)--(4) ;
\draw[line width=2pt] (2)--(5) ;
\draw[line width=2pt] (3)--(6) ;

\end{tikzpicture} $(1,1,3,2)$

\columnbreak
\begin{tikzpicture}[scale=.2]
\tikzstyle{every node} = [inner sep=.1cm]

\node[fill, shape=circle] (1) at (0,0) {};
\node[fill, shape=circle] (2) at (4,0) {};
\node[fill, shape=circle] (3) at (8,0) {};
\node[fill, shape=circle] (4) at (0,5) {};
\node[fill, shape=circle] (5) at (4,5) {};
\node[fill, shape=circle] (6) at (8,5) {};

\draw[line width=2pt] (1)--(2) ;
\draw[line width=2pt] (2)--(3) ;
\draw[line width=2pt, style=loosely dashed] (1) .. controls(2,-4) and (6,-4) .. (3) ;
\draw[line width=2pt] (4)--(5) ;
\draw[line width=2pt] (5)--(6) ;
\draw[line width=2pt, style=loosely dashed] (4) .. controls(2,9) and (6,9) .. (6) ;
\draw[line width=2pt] (1)--(4) ;
\draw[line width=2pt] (2)--(5) ;
\draw[line width=2pt] (3)--(6) ;

\end{tikzpicture} $(2,0,6,0)$

\columnbreak
\begin{tikzpicture}[scale=.2]
\tikzstyle{every node} = [inner sep=.1cm]

\node[fill, shape=circle] (1) at (0,0) {};
\node[fill, shape=circle] (2) at (4,0) {};
\node[fill, shape=circle] (3) at (8,0) {};
\node[fill, shape=circle] (4) at (0,5) {};
\node[fill, shape=circle] (5) at (4,5) {};
\node[fill, shape=circle] (6) at (8,5) {};

\draw[line width=2pt] (1)--(2) ;
\draw[line width=2pt, style=loosely dashed] (2)--(3) ;
\draw[line width=2pt] (1) .. controls(2,-4) and (6,-4) .. (3) ;
\draw[line width=2pt] (4)--(5) ;
\draw[line width=2pt] (5)--(6) ;
\draw[line width=2pt, style=loosely dashed] (4) .. controls(2,9) and (6,9) .. (6) ;
\draw[line width=2pt] (1)--(4) ;
\draw[line width=2pt] (2)--(5) ;
\draw[line width=2pt] (3)--(6) ;

\end{tikzpicture} $(2,2,2,2)$

\end{center}
\end{multicols}
\caption{The four unbalanced switching classes of the prism $K_3 \vee_M K_3$ and their negative cycle vectors.}
\label{F:3M3}
\end{figure}

\subsubsection{Prisms, with cube}

The triangular prism lends support to our belief that $\dim\NCV(K_p \vee_M K_p)=2p-2$.  However, it is atypical since it is also a prism, $C_p \operatorname{\square} K_2$ with $p=3$.  (Prisms with $p>3$ do not have permutable perfect matchings but they make good examples.)  The next prism is the cube, $Q_3=C_4 \operatorname{\square} K_2$.  It is bipartite and has only three cycle lengths: $4$, $6$,  and $8$.  Three unbalanced signatures whose negative cycle vectors are linearly independent are
\begin{enumerate}[\qquad $\sigma_1$,]
\item with one negative edge, $e$.  It has $c^-(\sigma_1)=(2,8,4)$;
\item with a second negative edge, parallel to $e$ and sharing a quadrilateral with it.  It has $c^-(\sigma_2)=(2,12,4)$;
\item with a second negative edge, also parallel to $e$ but not in a common quadrilateral.  It has $c^-(\sigma_3)=(2,4,2)$.
\end{enumerate}
Thus, $\dim\NCV(Q_3)=|\Spec(Q_3)|$, again agreeing with Conjecture \ref{Conj:S}.



\begin{thebibliography}{9}

\bibitem{KittipassornMeszaros}
{Teeradej Kittipassorn and G\'abor M\'esz\'aros},
\newblock Frustrated triangles.
\newblock {\em Discrete Math.}, 338 (2015), 2363--2373.

\bibitem{MallowsSloane}
{C.L.\ Mallows and N.J.A.\ Sloane},
\newblock Two-graphs, switching classes and {E}uler graphs are equal in number.
\newblock {\em SIAM J. Appl. Math.}, 28(4) (1975), 876--880.

\bibitem{PopTom1996}
{Drago\c{s}-Radu Popescu and Ioan Tomescu},
\newblock Negative cycles in complete signed graphs.
\newblock {\em Discrete Appl. Math.}, 68 (1996), 145--152.

\bibitem{SS2017}
{Alex Schaefer and Eric Swartz},
\newblock Graphs with multiply transitive matchings.
\newblock Submitted.

\bibitem{Tomescu1976}
{Ioan Tomescu},
\newblock Sur le nombre des cycles n\'egatifs d'un graphe complet sign\'e.
\newblock {\em Math. Sci. Humaines}, 53 (1976), 63--67.

\end{thebibliography}

\end{document}